\def\Cat{\mathcal{C}\!at}
\title{Some Remarks on Realization of Simplical Algebras in $\Cat$}
\author[Fiedorowicz]{Z. Fiedorowicz}
\address{Department of Mathematics, The Ohio State University\\ Columbus, OH 43210-1174, USA}
\email{fiedorow@math.ohio-state.edu}
\urladdr{http://www.math.ohio-state.edu/people/fiedorow/view}
\author[Vogt]{R.M.~Vogt}
\address{Universit\"at Osnabr\"uck, Fachbereich Mathematik/Informatik\\ Albrechtstr. 28a, 49069 Osnabr\"uck, Germany}
\email{rainer@mathematik.uni-osnabrueck.de}
\urladdr{http://www.mathematik.uni-osnabrueck.de/staff/phpages/vogtr.rdf.shtml}
\newtheorem{prop}{Proposition}[section]
\newtheorem{theo}[prop]{Theorem}
\newtheorem{coro}[prop]{Corollary}
\theoremstyle{definition}
\newtheorem{defi}[prop]{Definition}
\newtheorem{exam}[prop]{Example}
\newtheorem{leer}[prop]{}
\newtheorem{rema}[prop]{Remark}
\def\scO{\mathcal{O}}
\def\hscO{\widehat{\mathcal{O}}}
\def\scC{\mathcal{C}}
\def\scD{\mathcal{D}}
\def\scT{\mathcal{T}}
\def\scA{\mathcal{A}}
\def\scB{\mathcal{B}}
\def\scD{\mathcal{D}}
\def\scF{\mathcal{F}}
\def\scL{\mathcal{L}}
\def\scK{\mathcal{K}}
\def\scS{\mathcal{S}}
\def\tmT{\widetilde{\mathbb{T}}}
\def\colim{\textrm{colim}}
\def\Id{{\textrm{Id}}}
\def\id{{\textrm{id}}}
\def\op{{\textrm op}}
\def\ob{{\textrm ob}}
\def\Top{\mathcal{T}\!op}
\def\STop{\mathcal{ST}\!op}
\def\SSets{\mathcal{SS}ets}
\def\SSSets{\mathcal{S}^2\mathcal{S}ets}
\def\SCat{\mathcal{SC}at}
\def\Sets{\mathcal{S}\!ets}
\def\In{\textrm{In}}
\def\End{\textrm{End}}
\def\cat{\textrm{cat}}
\def\cat{\mbox{cat}}
\def\sd{\mbox{sd}}
\def\esd{\mbox{esd}}
\def\ssd{\mbox{ssd}\,}
\def\diag{\mbox{diag}\,}
\begin{document}
\begin{abstract}    % type your abstract below
In this paper we discuss why the passage from simplicial algebras over a $\Cat$ operad to algebras over that operad
involves apparently unavoidable technicalities.
\end{abstract}

\maketitle

%%%%%%%%%%%%%%%%%%%%   Start of main body of article
\vspace{2ex}
\section{Introduction}

One direction of research in homotopy theory has involved comparing algebraic structures in $\Top$ to corresponding structures
in $\Cat$.  For instance in algebraic K-theory one often starts with some algebraic structure on a category and converts it into
a corresponding structure on a topological space or spectrum.  A natural question is to what extent can this approach be reversed,
 i.e.  to what extent do algebraic structures in $\Top$ correspond to algebraic structures in  $\Cat$?

Thomason \cite{Thom2} was the first to consider this question.  He showed that symmetric monoidal categories model all connective
spectra.  In a series of papers \cite{FV}, \cite{FSV}, \cite{FSV2}, we considered the same question for iterated loop spaces and we
showed that iterated monoidal categories model all such spaces.

In both Thomason's work and ours, the most technical part of the proof involves the passage from simplicial $\Cat$-algebras over a
$\Cat$-operad to plain $\Cat$-algebras over that operad, a process we refer to as \textit{rectification}.  It has been suggested to us that we
might avoid these technicalities if we construct an appropriate categoric realization functor.  In this paper we will discuss why we believe
that such a simple realization construction is not possible.

\section{Notations and definitions}\label{sec.not}

First let us clarify what we mean by an ``appropriate'' categoric realization functor. For this we have to introduce some notation.

\begin{leer}\label{2_1} \textbf{Notations:}
 \begin{enumerate}
  \item $\Delta$ denotes the category of posets $[n]=\{0<1<2<\dots<n\}$ and order preserving maps. We usually write
$\Delta(k,n)$ rather than $\Delta([k],[n])$ for the morphism sets.
  \item $\Cat,\ \Sets, \ \Top$ denote the categories of small categories, of sets, and of $k$-spaces respectively.
  \item $\SCat,\ \SSets,\ \STop$ denote the associated categories of simplicial objects.
  \item $\SSSets$ denotes the category of bisimplicial sets.
  \item For a functor $F:\scC\to \scD$ and an small category $\scK$ let $F^{\scK}: \scC^{\scK}\to \scD^{\scK}$ denote its prolongation
  to the functor categories.
  \item A natural transformation $\alpha:F\Rightarrow G$ of functors $F,\ G:\scC\to \Top$ is called a {\em weak equivalence}, if the maps
  $\alpha(C):F(C)\to G(C)$ are homotopy equivalences.
  \item Two such functors are called {\em equivalent} if there is a chain of weak equivalences connecting them.
  \item $B:\Cat\to \Top$ denotes the classifying space functor, i.e. the composite of the nerve functor $N:\Cat\to \SSets$ 
   and the geometric realization functor $|-|:\SSets\to \Top$.
   \item A functor $F:\scC\to \scD$ between small categories is called a weak equivalence if $B(F): B(\scC)\to B(\scD)$ is a 
   homotopy equivalence.
 \end{enumerate}
\end{leer}

\begin{defi}\label{2_2} A  \textit{categoric realization functor} is a functor $F_D:\SCat\to\Cat$ given by a coend construction of the form
\[F_D(\scC_*) = \scC_*\otimes_{\Delta}D(*)=\left(\amalg_{n\ge0}\scC_n\times D(n)\right)/\approx,\]
where $D:\Delta\to\Cat$ is a fixed cosimplicial category.
\end{defi}

If we take $D_0:\Delta\to\Cat$ to be the constant cosimplicial category on the trivial category $\ast$, we obtain 
$$F_{D_0}(\scC_*)=\colim_{\Delta^{\op}}\scC_\ast$$
which apparently is not what we are looking for. We need a categoric realization functor which has the ``correct'' homotopy type.
Moreover, since we want to replace a simplicial algebra over an operad in $\Cat$ by an algebra in $\Cat$ of the same homotopy type
the realization functor has to be product preserving.

\begin{defi}\label{2_3}
A categoric realization functor $F_D$ is called {\em good} if the functor $B\circ F_D: \SCat\to\Cat\to \Top$ is equivalent to the functor
$|-|\circ B^{\Delta^{\op}}:\SCat\to \STop\to \Top$ or, equivalently, to the functor $|-|\circ \mbox{diag} \circ N^{\Delta^{\op}}:
\SCat\to \SSSets\to \SSets\to \Top$ where $\mbox{diag}$ is the diagonal functor. We call $F_D$
product preserving, if
the natural map $F(\scC_*\times \scD_*)\longrightarrow F(\scC_*)\times F(\scD_*)$ is an isomorphism.
\end{defi}

If $F_D$ is a product preserving categoric realization functor and $\scC_*$ is a simplicial category, then $F_D$ induces an
operad map $\End(\scC_*)\to \End(F_D(\scC_*))$ of endomorphism operads. Hence, if $\scO$ is a $\Cat$-operad and $\scC_*$ is a 
simplicial $\scO$-algebra, then $F_D(\scC_*)$ is an $\scO$-algebra in $\Cat$. If $F_D$ is also good, then the classifying space of this
$\scO$-algebra is equivalent to the classifying space of the original simplicial $\scO$-algebra.
 Thus $F_D$ would provide the desired rectification functor.

\begin{rema}\label{2_4} In what follows we will need to consider iterated coend constructions over $\Delta$.  In such circumstances it
is clearer to use the following notation for coends
\[A_*\otimes_\Delta B^*= A_n\otimes_{n\in\Delta}B^n,\]
where $A_*$, respectively $B^*$, are simplicial, respectively cosimplicial, objects in $\Sets$, $\SSets$, or $\Cat$. This conforms
to the notation for coends in \cite{Mac}.
\end{rema}

The goodness condition on $F_D$ is an indication that $BD(n)$ should be closely related to the standard $n$-simplex $\Delta(n)$.
We start with three obvious candidates, for which $BD(n)\cong \Delta^n$, and explain why each falls short of the mark.  We also discuss a fourth variant, with $BD(n)\simeq\Delta^n$, which also fails.  In the last two sections
we investigate another possible option: replacing the original simplicial category $\scC_*$ by some kind of cofibrant
resolution prior to applying a categoric realization functor.  If we do this, then we obtain a good categoric realization
for any choice of $D$ with $BD(n)\simeq\Delta^n$.  However this results in the loss of the $\scO$-algebra
structure.  This can be remedied by applying the rectification process of  \cite{FSV2} degreewise.  However
this does not result in any simplification of the rectification process.

\section{Standard categoric realization}\label{sec.stan}

The most obvious candidate for a categoric realization functor is the one where we take $D(n)=D_1(n)=[n]$.
We will denote the resulting categoric realization $F_{D_1}$ by $F_1$.

Before we explain why this fails to be a good categoric realization functor, let us start with an elementary observation.  Since we
can think of a set as being the same thing as a discrete category, we can regard a simplicial set as being a special case of a
simplicial category.  Thus any categoric realization functor restricts to a functor ${\SSets\longrightarrow\Cat}$. Since these functors
are constructed as coends, they preserve colimits.  Such colimit-preserving functors ${\SSets\longrightarrow\Cat}$ are generically
referred to as \textit{categorification functors}.

There is a standard categorification functor ${\cat:\SSets\longrightarrow\Cat}$.  This can be briefly described as
the left adjoint to the nerve functor ${N:\Cat\longrightarrow\SSets}$.  A more explicit description is as follows.  Given a simplicial set
$S_*$ one associates to it the directed graph whose vertices are the 0-simplices $S_0$.  The edges are the 1-simplices $S_1$.  Each
edge $x$ is directed from $d_1x$ to $d_0x$.  Then one takes the free category on this directed graph.  Finally for each 2-simplex
$y\in S_2$, one identifies the composite $(d_0 y)(d_2y)$ with $d_1(y)$. The resulting quotient category is called $\cat(S_*)$.

\begin{leer}\label{3_1}
The standard categorification functor ${\cat:\SSets\longrightarrow\Cat}$ has the following nice properties:
\begin{itemize}
\item[{\rm (i)}] $\cat\circ N= Id$
\item[{\rm (ii)}] $\cat$ preserves products, i.e. the natural map $\cat(S_*\times T_*)\longrightarrow\cat(S_*)\times\cat(T_*)$
is an isomorphism.
\end{itemize}
\end{leer}

The first property is an immediate consequence of the definition.  A proof of the second property may be found in \cite[p. 1097]{FSV}.

\begin{prop}\label{3_2} The restriction to simplicial sets of the categoric realization functor $F_1$ corresponding to the cosimplicial category $D_1(n)=[n]$
is the standard categorification functor ${\cat:\SSets\longrightarrow\Cat}$. Moreover, $F_1$ is product preserving, and for any simplicial category $\scC_*$ we have
$F_1(\scC_*)=\cat\left(\diag\,N^{\Delta^{\op}}\scC_*\right)$.
\end{prop}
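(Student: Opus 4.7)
The plan is to establish the third assertion first, from which the other two follow cleanly by formal manipulation.

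First I would prove (1), that $F_1$ restricted to $\SSets$ coincides with $\cat$. Both functors $F_1|_{\SSets}$ and $\cat: \SSets \to \Cat$ preserve colimits: $F_1$ because it is defined by a coend, and $\cat$ because it is a left adjoint to $N$. Since every simplicial set is canonically a colimit of representables $\Delta^n$, it suffices to check agreement on the $\Delta^n$. By property 3.1(i), $\cat(\Delta^n) = \cat(N[n]) = [n]$. On the other hand, the coend/Yoneda formula gives $F_1(\Delta^n) = \Delta(-,[n]) \otimes_\Delta [*] \cong [n]$. Hence $F_1|_{\SSets} \cong \cat$.

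Next I would prove (3) by a direct coend manipulation. The key input is the standard identity for bisimplicial sets
\[\diag Y \;\cong\; Y_{n,*}\otimes_{n\in\Delta}\Delta^n,\]
which one checks on the generators $\Delta^p \boxtimes \Delta^q$ via Yoneda. Applied to $Y = N^{\Delta^{\op}}\scC_*$ this reads $\diag N^{\Delta^{\op}}\scC_* = N(\scC_n)\otimes_{n\in\Delta}\Delta^n$. Now I compute
\[F_1(\scC_*) \;=\; \scC_n\otimes_{n\in\Delta}[n] \;=\; \cat(N\scC_n)\otimes_{n\in\Delta}\cat(\Delta^n),\]
using $\cat\circ N = \Id$ from 3.1(i) and the identification $\cat(\Delta^n)=[n]$ established in step one. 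Because $\cat$ preserves products (3.1(ii)) and preserves colimits (in particular coends), this last coend equals
\[\cat\Bigl(N(\scC_n)\otimes_{n\in\Delta}\Delta^n\Bigr) \;=\; \cat(\diag N^{\Delta^{\op}}\scC_*),\]
yielding (3).

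Finally, (2) is formal from (3): the nerve $N$ is a right adjoint and thus preserves products, so $N^{\Delta^{\op}}$ preserves products; the diagonal $\diag$ obviously preserves products of bisimplicial sets; and $\cat$ preserves products by 3.1(ii). Composing these three product-preserving functors gives that $F_1$ is product preserving.

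I do not expect a serious obstacle. The only step requiring a little care is verifying the bisimplicial coend identity $\diag Y \cong Y_{n,*}\otimes_{n\in\Delta}\Delta^n$, but this is standard and reduces by colimit-preservation to the case $Y=\Delta^p\boxtimes\Delta^q$, where both sides compute $\Delta^p\times\Delta^q$. The interchange of $\cat$ with the outer coend in the proof of (3) is the one point that one should explicitly justify (via $\cat$ being a left adjoint), but it presents no real difficulty.
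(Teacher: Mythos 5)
Your proposal is correct and follows essentially the same route as the paper: both arguments rest on writing a simplicial set (resp.\ $\diag N^{\Delta^{\op}}\scC_*$) as a coend of the $\scC_n\times\Delta^n_*$ over $\Delta$ and pulling $\cat$ through using its preservation of colimits and products together with $\cat\circ N=\Id$. Your check of the first claim on representables is just a mild repackaging of the paper's co-Yoneda computation, and the product-preservation argument is identical.
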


\begin{proof} Let $S_*$ be a simplicial set.  Then we have
\[S_k=S_*\otimes_{\Delta}\Delta^*_k=\left(\amalg_{n\ge0}S_n\times\Delta(k,n)\right)/\approx\]
where $\Delta(*,n)=\Delta^n_*$ is the standard simplicial set model of $\Delta^n$,
 and the equivalence relation is given by the standard face and degeneracy relations in $\SSets$.  If we regard the set $S_n$ as a discrete simplicial set,
 we have the following sequence of equalities
\begin{eqnarray*}
\cat(S_*) &= &\left(\amalg_{n\ge0}\cat(S_n\times\Delta(*,n))\right)/\approx\\
&= &\left(\amalg_{n\ge0}\cat(S_n)\times\cat(\Delta(*,n)\right)/\approx\\
&= &\left(\amalg_{n\ge0}S_n\times[n]\right)/\approx\\
&= &S_*\otimes_\Delta D_1(*)\\
&= &F_1(S_*)
\end{eqnarray*}
Here the first equality is due to the fact that $\cat$ preserves colimits.  The second equality follows from the fact that $\cat$ is product
preserving \ref{3_1}.(ii).  The third equality follows from $\cat(S_n)=S_n$, since $S_n$ is discrete, and 
$\cat(\Delta(-,n))=\cat(\Delta^n_*)=(\cat\circ N)[n]=[n]$, c.f. \ref{3_1}.(i). The fourth equality is just the definition of coend and the last
equality is the definition of $F_1$.

For the second statement, we note that
\[\diag\,N^{\Delta^{\op}}\scC_*=N_*\scC_n\otimes_{n\in\Delta}\Delta(*,n)=N_*\scC_n\otimes_{n\in\Delta}N_*[n].\]
Here we use the notation $N_*$ instead of $N$ for nerve, in order to emphasize that the nerve functor takes values in $\SSets$.
Specifically, the set of $m$-simplices of the right hand side of the above equality is the following coend in $\Sets$:
\[N_m\scC_n\otimes_{n\in\Delta}\Delta(m,n)=N_m\scC_n\otimes_{n\in\Delta}N_m[n].\]
Since $\cat$ preserves colimits and products, it follows that
\begin{eqnarray*}
\cat\left(\mbox{diag}\,N^{\Delta^{\op}}\scC_*\right) &= &\cat(N_*\scC_n)\otimes_{n\in\Delta}\cat(N_*[n])\\
&= &\scC_n\otimes_{n\in\Delta}[n]\\
&= &\scC_*\otimes_\Delta D_1(*)\\
&= &F_1(\scC)
\end{eqnarray*}
Here the second equality follows from $\cat\circ N=Id$ and the rest follows from definition.

Since $\cat, \ \mbox{diag}$, and $N^{\Delta^{\op}}$ preserve products, so does $F_1$.
\end{proof}

\begin{coro}$F_1$ is product preserving, but not good.
\end{coro}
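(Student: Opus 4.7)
The product-preserving half is already contained in the last line of the proof of Proposition \ref{3_2}, so the corollary reduces to showing that $F_1$ is not good. My plan is to restrict the goodness condition to a very small class of simplicial categories and produce an explicit counterexample inside that class.

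First I would take an ordinary simplicial set $S_*$ and regard it as a simplicially discrete simplicial category $\scC_*$ with $\scC_n = S_n$. Then $N\scC_n$ is the constant simplicial set on $S_n$, so $N^{\Delta^{\op}}\scC_*$ is constant in the nerve direction; its diagonal is $S_*$ itself and hence $|\diag N^{\Delta^{\op}}\scC_*| = |S_*|$. Combining this with the identity $F_1(\scC_*) = \cat(S_*)$ from Proposition \ref{3_2}, goodness of $F_1$ would force $B\cat(S_*)$ and $|S_*|$ to be weakly equivalent for every simplicial set $S_*$.

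Next I would exhibit an $S_*$ violating this. The cleanest choice is the minimal simplicial model of the $2$-sphere, $S_* = \Delta^2/\partial\Delta^2$, whose realization is $S^2$. Since $\cat$ is left adjoint to $N$ it preserves colimits, so $\cat(S_*)$ is the pushout in $\Cat$ of $\ast \leftarrow \cat(\partial\Delta^2) \to [2]$. The map $\cat(\partial\Delta^2) \to [2]$ is surjective on objects and on morphisms, so this pushout collapses $[2]$ to the terminal category. Thus $B\cat(S_*) = \ast$ while $|S_*| = S^2$ is not contractible, so no chain of natural weak equivalences can connect $B\circ F_1$ to $|-|\circ B^{\Delta^{\op}}$.

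I do not expect any step here to pose a real obstacle. The conceptual point that the example isolates is the familiar defect of $\cat$: it sees only the $0$-, $1$-, and $2$-simplices of its input and therefore discards all higher homotopical information. The only matter requiring a moment's care is the identification $|\diag N^{\Delta^{\op}}\scC_*| = |S_*|$ in the discrete case, and this is immediate from the observation that the nerve of a discrete category is a constant simplicial set.
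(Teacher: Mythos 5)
Your proposal is correct and follows essentially the same route as the paper: the product-preserving half is quoted from Proposition \ref{3_2}, and non-goodness is reduced to the failure of $\cat$ to be a homotopy inverse to the nerve on simplicial sets, witnessed by $\Delta^2_*/\partial\Delta^2_*$ --- which is exactly the paper's Example \ref{3_4} (with $n=2$). Your explicit pushout computation showing $\cat(\Delta^2_*/\partial\Delta^2_*)=\ast$ just fills in the detail the paper leaves to that example.
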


\begin{proof}
By Proposition \ref{3_2}, the functor $F_1$ is product preserving. However Proposition \ref{3_2} also rules out $F_1$ as good, because this 
would require
that $F_1$ restricted to simplicial sets should be a homotopy inverse to the nerve functor.  It is well known that $\cat$ is not such a
 functor, since $\cat$ is by definition completely determined by its restriction to the 2-skeleton of a simplicial set.
\end{proof}

\begin{exam}\label{3_4}
A counterexample is $\Delta^n_*/\partial\Delta^n_*$, with $n\ge2$.  Then ${N\cat\left(\Delta^n_*/\partial\Delta^n_*\right)=*}$,
whereas $\Delta^n_*/\partial\Delta^n_*$ has the homotopy type of the $n$-sphere. This is closely related to the well known fact
that the nerve functor does not preserve the homotpy type of pushouts in $\Cat$.  A simple example of this is the following pushout
diagram in $\Cat$
$$
 \begin{tikzpicture}[>=angle 45,shorten >=0.2cm,shorten <=0.2cm,->,scale=0.35]
  \draw[->](0,0) -- (4,0);
	\draw[->](0,0) -- (2,4);
	\draw[<-](2,4) -- (4,8);
	\draw[->](8,0) -- (4,0);  
	\draw[->](8,0) -- (6,4); 
	\draw[<-](6,4) -- (4,8); 
	\draw[fill] (0,0) circle(3pt); 
	\draw[fill] (4,0) circle(3pt);
	\draw[fill] (8,0) circle(3pt);
	\draw[fill] (2,4) circle(3pt); 
	\draw[fill] (6,4) circle(3pt);
	\draw[fill] (4,8) circle(3pt);
  \path[right hook->](8,4)edge (12,4);
  \draw[->](12,0) -- (16,0);
	\draw[->](12,0) -- (14,4);
	\draw[<-](14,4) -- (16,8);
	\draw[->](20,0) -- (16,0);  
	\draw[->](20,0) -- (18,4); 
	\draw[<-](18,4) -- (16,8); 
	\draw[->](12,0) -- (16,3);
	\draw[->](14,4) -- (16,3);
	\draw[->](16,8) -- (16,3);
	\draw[->](18,4) -- (16,3);
	\draw[->](20,0) -- (16,3);
	\draw[->](16,0) -- (16,3);
	\draw[fill] (12,0) circle(3pt); 
	\draw[fill] (16,0) circle(3pt);
	\draw[fill] (20,0) circle(3pt);
	\draw[fill] (14,4) circle(3pt); 
	\draw[fill] (18,4) circle(3pt);
	\draw[fill] (16,8) circle(3pt);
	\draw[fill] (16,3) circle(3pt);
	\draw[shorten >=0.5cm,shorten <=0.5cm,->](4,0) -- (4,-4);
	\draw[shorten >=0.5cm,shorten <=0.5cm,->](16,0) -- (16,-4);
	\draw[->](8,-4) -- (12,-4);
	\draw[->](14,-4) -- (18,-4);
	\draw[fill] (4,-4) circle(5pt);
	\draw[fill] (14,-4) circle(3pt); 
	\draw[fill] (18,-4) circle(3pt);
 \end{tikzpicture} 
$$

Thus the nerve of the pushout in $\Cat$ is contractible.  On the other hand the pushout of the correponding diagram of nerves
has the homotopy type of $S^2$.
\end{exam}

\section{Double barycentric subdivision}\label{sec.bary}

The work of Fritsch, Latch, Thomason and Wilson \cite{FL}, \cite{LTW}, \cite{T2} shows that $\cat\circ\sd^2$ is a homotopy 
inverse to the nerve functor, where $\sd$ denotes barycentric subdivision and $\sd^2$ the double barycentric
subdivision.  This suggests that the cosimplicial category
$D_2(n)=\cat\circ\sd^2\left(\Delta^n_*\right)$ might give a good categoric realization.

First let us recall that the barycentric subdivision of a simplicial set $S_*$ can be described as follows:
\[\sd(S_*)=S_n\otimes_{n\in\Delta}N\scF_n=S_n\otimes_{n\in\Delta}\sd(\Delta^n_*),\]
where $\scF_n$ is the poset of faces of $\Delta^n$. 

\begin{prop}\label{4_1} For any simplicial category $\scC_*$ we have
\[F_2(\scC_*)=F_{D_2}(\scC_*)=\cat\left(\sd^2\diag N^{\Delta^{\op}}\scC_*\right).\]
Thus $F_2$ is a good categoric realization.
\end{prop}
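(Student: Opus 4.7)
The plan is to follow the template of Proposition \ref{3_2}, inserting $\sd^2$ in the appropriate place, and then invoke the Fritsch--Latch--Thomason--Wilson theorem for goodness. The key preliminary observation I would make is that $\sd$ preserves colimits: the formula $\sd(S_*) = S_n \otimes_{n\in\Delta} N\scF_n$ recalled just above the statement realizes $\sd$ as a left Kan extension along the Yoneda embedding $\Delta \hookrightarrow \SSets$, hence is cocontinuous; iterating, so is $\sd^2$.

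First I would establish the identity $F_2(\scC_*) = \cat\bigl(\sd^2\diag N^{\Delta^{\op}}\scC_*\bigr)$. Starting from
\[\diag N^{\Delta^{\op}}\scC_* = N_*\scC_n \otimes_{n\in\Delta} N_*[n],\]
which is the formula already verified inside the proof of \ref{3_2}, I would pull $\sd^2$ inside the coend using cocontinuity to obtain
\[\sd^2\diag N^{\Delta^{\op}}\scC_* = N_*\scC_n \otimes_{n\in\Delta} \sd^2\Delta^n_*.\]
Applying $\cat$, which preserves colimits and products by \ref{3_1}, together with $\cat\circ N = \Id$, then yields
\[\cat\,\sd^2\diag N^{\Delta^{\op}}\scC_* = \scC_n \otimes_{n\in\Delta} \cat\sd^2\Delta^n_* = \scC_n \otimes_{n\in\Delta} D_2(n) = F_2(\scC_*).\]

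For the goodness claim, I would combine this identity with the FLTW theorem. Writing
\[B F_2(\scC_*) = \bigl|N \cat\,\sd^2 \diag N^{\Delta^{\op}}\scC_*\bigr|,\]
the FLTW result provides a natural chain of simplicial weak equivalences connecting $N\cat\sd^2$ to the identity endofunctor on $\SSets$. Whiskering this chain with $\diag N^{\Delta^{\op}}$ on the right and with $|-|$ on the left---which sends pointwise simplicial weak equivalences to pointwise homotopy equivalences of $k$-spaces---produces a chain of natural weak equivalences between $B\circ F_2$ and $|-|\circ\diag\circ N^{\Delta^{\op}}$, which is the required goodness condition.

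The hard part will not be the symbolic manipulation, which parallels \ref{3_2} essentially verbatim, but rather the careful handling of the FLTW theorem. One needs the homotopy inverse relationship between $\cat\circ\sd^2$ and $N$ to come with a genuinely natural chain of simplicial weak equivalences, and these equivalences must remain weak equivalences when evaluated on the (typically non-Kan) diagonal $\diag N^{\Delta^{\op}}\scC_*$. Since this is essentially the content of \cite{FL}, \cite{LTW}, \cite{T2}, the obstruction is a matter of citing the correct form of the theorem rather than of proving anything new.
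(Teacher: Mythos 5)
Your proposal is correct and follows essentially the same route as the paper: pull $\sd^2$ inside the coend (the paper does this via the coend formula for $\sd$ plus co-Yoneda, you via cocontinuity, which is the same fact), then apply $\cat$ using colimit- and product-preservation and $\cat\circ N=\Id$, and finally invoke the Fritsch--Latch--Thomason--Wilson theorem for goodness. If anything, you are more explicit than the paper about the naturality of the FLTW equivalences needed for the goodness claim, which the paper leaves implicit.
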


\begin{proof} The proof is similar to that of the second part of Proposition \ref{3_2}. First of all we have
\begin{eqnarray*}
\sd^2\diag N^{\Delta^{\op}}\scC_* &= &\sd^2\left(N_*\scC_n\otimes_{n\in\Delta}\Delta(*,n)\right)\\
&= &\left(N_*\scC_n\otimes_{n\in\Delta}\Delta(m,n)\right)\otimes_{m\in\Delta}\sd^2(\Delta(*,n))\\
&= &N_*\scC_n\otimes_{n\in\Delta}\left(\Delta(m,n)\otimes_{\Delta}\sd^2(\Delta(*,m))\right)\\
&= &N_*\scC_n\otimes_{n\in\Delta}\sd^2(\Delta(*,n))
\end{eqnarray*}

Since $\cat$ preserves products and colimits, this implies that
\begin{eqnarray*}
\cat\left(\sd^2\diag N^{\Delta^{\op}}\scC_*\right) &= &\cat\left(N_*\scC_n\otimes_{n\in\Delta}\sd^2(\Delta(*,n))\right)\\
&= &\cat\circ N_*(\scC_n)\otimes_{n\in\Delta}\cat\left(\sd^2(\Delta^n_*)\right)\\
&= &\scC_n\otimes_{n\in\Delta} D_2(n)\\
&= &F_2(\scC_*)\\
\end{eqnarray*}
\end{proof}

Unfortunately $F_2$ is not product preserving,
since barycentric subdivision is not product preserving.  For instance the barycentric subdivision
 $\mbox{sd}\left(\Delta^1_*\times\Delta^1_*\right)$ is given by the following picture
 $$
 \begin{tikzpicture}[>=angle 45,shorten >=0.2cm,shorten <=0.2cm,->,scale=0.35]
  \draw[->](0,0) -- (4,0);
	\draw[->](0,0) -- (0,4);
	\draw[->](0,0) -- (4,4);
	\draw[->](0,0) -- (2,6);
	\draw[->](0,0) -- (6,2);
	\draw[->](0,4) -- (2,6);
	\draw[->](0,8) -- (4,8);
	\draw[->](0,8) -- (0,4);
	\draw[->](0,8) -- (2,6);
	\draw[->](4,8) -- (2,6);
	\draw[->](4,0) -- (6,2);
	\draw[->](8,0) -- (4,0);  
	\draw[->](8,0) -- (8,4);
	\draw[->](8,0) -- (6,2);
	\draw[->](8,4) -- (6,2);		
	\draw[->](8,8) -- (8,4);
	\draw[->](8,8) -- (4,4);
	\draw[->](8,8) -- (4,8);
	\draw[->](8,8) -- (2,6);
	\draw[->](8,8) -- (6,2);	
	\draw[->](4,4) -- (6,2);
	\draw[->](4,4) -- (2,6);
	\draw[fill] (0,0) circle(3pt); 
	\draw[fill] (0,4) circle(3pt);	
	\draw[fill] (0,8) circle(3pt); 
	\draw[fill] (4,0) circle(3pt);
	\draw[fill] (4,4) circle(3pt); 
	\draw[fill] (4,8) circle(3pt);
	\draw[fill] (8,0) circle(3pt);
	\draw[fill] (8,4) circle(3pt);
	\draw[fill] (8,8) circle(3pt);	
	\draw[fill] (6,2) circle(3pt);
	\draw[fill] (2,6) circle(3pt);
 \end{tikzpicture} 
$$

On the other hand  $\mbox{sd}\left(\Delta^1_*\right)\times\mbox{sd}\left(\Delta^1_*\right)$ is given by the following picture
$$
 \begin{tikzpicture}[>=angle 45,shorten >=0.2cm,shorten <=0.2cm,->,scale=0.35]
  \draw[->](0,0) -- (4,0);
	\draw[->](0,0) -- (0,4);
	\draw[->](0,0) -- (4,4);
	\draw[->](0,8) -- (4,8);
	\draw[->](0,8) -- (0,4);
	\draw[->](0,8) -- (4,4);
	\draw[->](4,0) -- (4,4);
	\draw[->](4,8) -- (4,4);		
	\draw[->](8,0) -- (4,0);  
	\draw[->](8,0) -- (8,4); 
	\draw[->](8,0) -- (4,4); 	
	\draw[->](8,8) -- (8,4);
	\draw[->](8,8) -- (4,4);
	\draw[->](8,8) -- (4,8);	
	\draw[fill] (0,0) circle(3pt); 
	\draw[fill] (0,4) circle(3pt);	
	\draw[fill] (0,8) circle(3pt); 
	\draw[fill] (4,0) circle(3pt);
	\draw[fill] (4,4) circle(3pt); 
	\draw[fill] (4,8) circle(3pt);
	\draw[fill] (8,0) circle(3pt);
	\draw[fill] (8,4) circle(3pt);
	\draw[fill] (8,8) circle(3pt);	
 \end{tikzpicture} 
$$

\section{Iterated edgewise subdivisions}\label{sec.edge}

As we noted above, one of the major defficiencies of barycentric subdivision is that is not product preserving.  There are two
subdivision constructions which are product preserving.  They are based on subdivision of the edges of a simplicial set.  Both
constructions use the monoidal structure of the category $\Delta$, given by taking the disjoint union of totally ordered finite
sets.  This defines a functor $<\!\!\!\!+\!\!\!\!>:\Delta\times\Delta\longrightarrow\Delta$ and hence also a functor
$<\!\!\!\!+\!\!\!\!>:\Delta^{op}\times\Delta^{op}\longrightarrow\Delta^{op}$.  Given a simplicial set $S_*$, we define
its \textit{edgewise subdivision} $\mbox{esd}(S_*)$ to be the simplicial set
\[\Delta^{op}\stackrel{(id,id)}{\longrightarrow}\Delta^{op}\times\Delta^{op}\stackrel{<\!\!+\!\!>}{\longrightarrow}\Delta^{op}
\stackrel{S_*}{\longrightarrow}\Sets.\]
Segal \cite{Se} constructed a variant of this subdivision, which has certain advantages.  This is based on the functor $r:\Delta\to\Delta$
which reverses the order of a totally ordered set.  Given a simplicial set $S_*$, we define its \textit{Segal subdivision} to be the
simplicial set $\mbox{ssd}(S_*)$
\[\Delta^{op}\stackrel{(r,id)}{\longrightarrow}\Delta^{op}\times\Delta^{op}\stackrel{<\!\!+\!\!>}{\longrightarrow}\Delta^{op}
\stackrel{S_*}{\longrightarrow}\Sets.\]
Explicitly we have
\[\mbox{esd}(S_*)_n = \mbox{ssd}(S_*)_n=S_{2n+1}.\]
The elementary faces and degeneracies for $\mbox{esd}(S_*)$ are given by
\[d_i^{\mbox{\tiny esd}(S_*)}=d_id_{i+n+1},\qquad s_i^{\mbox{\tiny esd}(S_*)}=s_{i+n+1}s_i,\qquad i=0,1,2,\dots, n\]
The elementary faces and degeneracies for $\mbox{ssd}(S_*)$ are given by
\[d_i^{\mbox{\tiny ssd}(S_*)}=d_{n-i}d_{n+1+i},\qquad s_i^{\mbox{\tiny ssd}(S_*)}=s_{n+1+i}s_{n-i},\qquad i=0,1,2,\dots, n\]
It is clear from these definitions that both edgewise subdivisions preserve products.

The following pictures illustrate these subdivisions for the standard 2-simplex $\Delta^2_*$.  Then $\mbox{esd}(\Delta^2_*)$ is
represented by
\[\begin{xymatrix}{
\bullet \\
\bullet\ar[u]\ar[r] &\bullet\ar[lu] \\
\bullet\ar[u]\ar[r]  &\bullet\ar[lu]\ar[u]\ar[r] &\bullet\ar[lu]
}\end{xymatrix},\]
whereas $\mbox{ssd}(\Delta^2_*)$ is represented by
\[\begin{xymatrix}{
\bullet\ar[d]\ar[rd] \\
\bullet&\bullet\ar[l] \\
\bullet\ar[u]\ar[r]  &\bullet\ar[lu]&\bullet\ar[lu]\ar[llu]\ar[l]
}\end{xymatrix},\]
It is clear from the picture above that the edgewise subdivision of a simplex is not the nerve of a category, since it is not closed under
composition of arrows.  If we apply the
categorification functor to the edgewise subdvision of $\Delta^n_*$ we obtain a category whose classifying space is 
$2n$-dimensional.  

On the other hand $D(n)=D_3^{(k)}(n)=\cat\circ\ssd^k(\Delta_*^n)$ does provide a categorification functor $F_3^{(k)}$ with $BD(n)\cong\Delta^n$ for any value $k$.
This follows from the fact that the Segal edgewise subdivision preserves nerves of categories.
 For if $S_*$ is the nerve of a category $\scC$, then $\ssd(S_*)$
is the nerve of the category $\scC'$ whose objects are the morphisms $C\longrightarrow D$ of $\scC$ and whose morphisms are
commutative diagrams
\[\begin{xymatrix}{
C_1\ar[r] &D_1\ar[d]\\
C_2\ar[u]\ar[r] &D_2
}\end{xymatrix}\]

Since the Segal edgewise subdivision is product preserving, $D_3^{(k)}$, for some fixed value of $k$, might have a chance to provide a very good categoric realization functor. 
However, it is not good for much the same reason as $D_1$.  For by the same argument as
in the proof of Proposition \ref{3_2} or \ref{4_1}, this categoric realization functor would take a simplicial set $S_*$  (regarded as a simplicial
category) to $\cat\circ\ssd^k(S_*)$.  Thus the goodness condition for simplicial sets requires this construction to be a homotopy inverse to the
nerve functor.  However a minor variation of Example \ref{3_4} shows that this is not the case.  For any vertex in $\ssd^k(\Delta^n_*)$ has at most $2^k$ nonzero 
barycentric coordinates in $\Delta^n$. Thus if $n=2^k-1$,
then there is precisely one vertex in the interior of $\Delta^n$, namely the barycenter of the simplex, and this vertex is a terminal
object in $\cat\circ\ssd^k(\Delta^n_*)$.  It follows that the following is a pushout diagram in $\Cat$:
\[\xymatrix{
\cat\circ\ssd^k(\partial\Delta^n_*)\ar[d]&\ar@{^{(}->}[r] &&\qquad\cat\circ\ssd^k(\Delta^n_*)\ar[d]\\
\mathlarger{\bullet}&\ar[r] &&\bullet\longrightarrow\bullet
}\]
Thus the nerve of the pushout in $\Cat$ is contractible, whereas the pushout of the nerves has the homotopy type
of $S^n$.

For much the same reason $D(n)=D_4^{(k)}(n)=\cat\circ\esd^k(\Delta_*^n)$ does not give a good
categoric realization functor for any fixed value of $k$. Again this would require that the restriction of
this functor to simplicial sets be a homotopy inverse to the nerve functor.  For by the same reason as
in the Segal edgewise subdivision, if $n\ge 2^k$, then every object in $\cat\circ\esd^k(\partial\Delta_*^n)$ is 
also an object of $\cat\circ\esd^k(\partial\Delta_*^n)$.  Since $\cat\circ\esd^k(\Delta_*^n)$ and
$\cat\circ\esd^k(\partial\Delta_*^n)$ both contain a common terminal object, it follows that the following is a
pushout diagram in $\Cat$:
\[\xymatrix{
\cat\circ\esd^k(\partial\Delta^n_*)\ar[d]&\ar@{^{(}->}[r] &&\qquad\cat\circ\esd^k(\Delta^n_*)\ar[d]\\
\mathlarger{\bullet}&\ar[r] &&\mbox{\ }\mathlarger{\bullet}
},\]
whereas the pushout of the nerves has the homotopy type of $S^n$.

We summarize

\begin{prop} The categoric realization functors $F_3$ and $F_4$ are product preserving, but they are
not good.
\end{prop}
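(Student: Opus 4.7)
The plan is to reduce the statement to the template of Propositions~\ref{3_2} and~\ref{4_1} combined with a variation on Example~\ref{3_4}. The first step is to establish the coend identities
\[F_3^{(k)}(\scC_*)=\cat\left(\ssd^k\diag N^{\Delta^{\op}}\scC_*\right),\qquad F_4^{(k)}(\scC_*)=\cat\left(\esd^k\diag N^{\Delta^{\op}}\scC_*\right)\]
by exactly the iterated-coend manipulation used in the proof of Proposition~\ref{4_1}. The only new input needed is that $\cat\circ N=\Id$ on $\Cat$, which yields $\cat\circ\ssd^k(\Delta^n_*)=D_3^{(k)}(n)$ and $\cat\circ\esd^k(\Delta^n_*)=D_4^{(k)}(n)$, so that after swapping the order of coends and pulling $\cat$ inside one lands on the defining coend for $F_3^{(k)}$ respectively $F_4^{(k)}$.

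Once these identities are in place, product preservation is immediate: $\cat$ preserves products by~\ref{3_1}(ii), the diagonal and the pointwise prolongation $N^{\Delta^{\op}}$ are evidently product preserving, and both edgewise subdivisions $\ssd$ and $\esd$ preserve products because their construction factors through the monoidal pairing $<\!\!\!+\!\!\!>$ on $\Delta^{\op}$, so on a product $S_*\times T_*$ the output is the termwise product of $\ssd(S_*)$ and $\ssd(T_*)$ (and similarly for $\esd$). Composing these facts yields product preservation for $F_3^{(k)}$ and $F_4^{(k)}$.

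For the failure of goodness I would argue by contradiction, in direct analogy with the Corollary to Proposition~\ref{3_2}. If $F_3^{(k)}$ were good, its restriction to $\SSets$---which the identity above identifies with $\cat\circ\ssd^k$---would have to be a homotopy inverse to the nerve functor. Setting $n=2^k-1$, every vertex of $\ssd^k(\Delta^n_*)$ has at most $2^k=n+1$ nonzero barycentric coordinates, so the barycenter of $\Delta^n$ is the unique interior vertex and becomes a terminal object of $\cat\circ\ssd^k(\Delta^n_*)$. The $\Cat$-pushout of $\bullet\longleftarrow\cat\circ\ssd^k(\partial\Delta^n_*)\hookrightarrow\cat\circ\ssd^k(\Delta^n_*)$ therefore produces a category with a terminal object, whose nerve is contractible, whereas the pushout of nerves in $\SSets$ realizes to $\Delta^n/\partial\Delta^n\simeq S^n$. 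The same scheme handles $F_4^{(k)}$: for $n\ge 2^k$ the inclusion $\esd^k(\partial\Delta^n_*)\hookrightarrow\esd^k(\Delta^n_*)$ is a bijection on vertices and both categories share a common terminal object, so the analogous $\Cat$-pushout collapses to a point while the $\SSets$-pushout again realizes to $S^n$. I expect the main obstacle to be the combinatorial verification of the terminal-object and barycentric-coordinate claims for iterated Segal and edgewise subdivisions; once these are nailed down, the nerve-versus-pushout incompatibility exhibited in Example~\ref{3_4} does the remaining work.
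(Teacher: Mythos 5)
Your proposal is correct and follows essentially the same route as the paper: product preservation comes from the fact that both edgewise subdivisions factor through the monoidal pairing on $\Delta^{\op}$ together with the product- and colimit-preservation of $\cat$, and the failure of goodness is exhibited by exactly the paper's counterexamples ($n=2^k-1$ with the barycenter as unique interior vertex for $\ssd^k$, and $n\ge 2^k$ with coinciding vertex sets and a common terminal object for $\esd^k$), which force the $\Cat$-pushout to have contractible nerve while the pushout of nerves realizes to $S^n$. No substantive differences from the paper's argument.
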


\section{Resolutions}\label{sec.res}

Section \ref{sec.bary} indicates that some type of resolutions might help: $D_2$ is a degreewise cofibrant replacement of $D_1$, if we give $\Cat$ 
the model category structure of Thomason \cite{T2}. It is well known that coends behave rather badly with respect to homotopy. In $\Top$ one therefore
replaces them by the 2-sided bar construction. There is a related construction in $\Cat$ which has been studied by Heggie \cite{Heg} and others.

Let $\scK$ be a small category, and $F:\scK^{\op}\to \Cat$ and $G:\scK\to \Cat$ be functors. Define a category $C(F,\scK,G)$ as follows: objects are
triples $(x,k,y)$ with $k\in \ob\scK,\ x\in \ob F(k),\ y\in\ob G(k)$. A morphism 
$$((x_0,k_0,y_0)\to (x_1,k_1,y_1)$$
is a triple $(f,\alpha,g)$ consisting of a morphism $\alpha:k_0\to k_1$ in $\scK$, a morphism $f:x_0\to F(\alpha)(x_1)$ in $F(k_0)$, and a morphism
$g:G(\alpha)(y_0)\to y_1$ in $G(k_1)$. Composition is defined by
$$ (f_2,\alpha_2,g_2)\circ (f_1,\alpha_1,g_1)=(F(\alpha_1)(f_2)\circ f_1,\alpha_2\circ\alpha_1,g_2\circ G(\alpha_2)(g_1)).$$
This construction is functorial in the obvious sense. 

\begin{prop}\label{6_1} \cite[Thm. 2.5]{Heg} If $\beta: F\Rightarrow F'$ and $\gamma: G\Rightarrow G'$ are natural transformations such that $\beta(k)$ and
 $\gamma(k)$ are weak equivalences for all objects $k\in \scK$, then the induced map
 $$ C(F,\scK,G)\to C(F',\scK,G')$$
 is a weak equivalence.
\end{prop}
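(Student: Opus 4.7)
The plan is to realise $C(F,\scK,G)$ as an iterated Grothendieck construction, express its classifying space as a homotopy colimit via Thomason's theorem, and conclude by the homotopy invariance of homotopy colimits.

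First I would factor the map as
\[
C(F,\scK,G) \;\longrightarrow\; C(F',\scK,G) \;\longrightarrow\; C(F',\scK,G'),
\]
so that by two-out-of-three it suffices to prove each factor is a weak equivalence. The two factors are formally dual (swap the roles of $F$ and $G$, accounting for $F$ being contravariant and $G$ covariant), so I need only treat the first, i.e.\ the case in which $\gamma$ is the identity.

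Directly from Heggie's composition rule, one verifies the isomorphism of categories
\[
C(F,\scK,G) \;\cong\; \bigl(\scK\int F\bigr) \int \bigl(G\circ\pi\bigr),
\]
where $\scK\int F$ denotes the Grothendieck construction of $F:\scK^{\op}\to\Cat$, the map $\pi:\scK\int F \to \scK$ is the canonical projection, and the outer $\int$ is the covariant Grothendieck construction of $G\circ\pi$. The correspondence is $(x,k,y)\leftrightarrow ((k,x),y)$ on objects and $(f,\alpha,g)\leftrightarrow((\alpha,f),g)$ on morphisms, and Heggie's composition formula reproduces the iterated Grothendieck composition. Thomason's theorem then yields a natural weak equivalence
\[
B\,C(F,\scK,G) \;\simeq\; \hocolim_{(k,x)\in\scK\int F} BG(k),
\]
and analogously for $F'$.

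The natural transformation $\beta$ induces a functor $\widetilde\beta:\scK\int F\to\scK\int F'$ over $\scK$ given by $(k,x)\mapsto(k,\beta(k)(x))$, under which the two coefficient diagrams are identified via pullback. Hence on classifying spaces the induced map becomes the comparison of the two homotopy colimits along $\widetilde\beta$ of matching coefficient diagrams. Because each $\beta(k)$ is a weak equivalence by hypothesis, $\widetilde\beta$ is a fibrewise weak equivalence of Grothendieck fibrations over $\scK$, and the coefficient is pulled back from $\scK$; under these hypotheses the comparison of homotopy colimits is a weak equivalence. The dual argument with $G$ in place of $F$ handles the $\gamma$ factor. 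The main obstacle I anticipate is this last invariance step: it does not follow merely from $\widetilde\beta$ being a weak equivalence of categories, but rather requires the stronger fact that a fibrewise weak equivalence of Grothendieck fibrations over a common base, with coefficients pulled back from the base, induces a weak equivalence on total homotopy colimits — provable by expressing each side as a two-sided bar construction over $\scK$ with $BF$ (respectively $BF'$) on one side and $BG$ on the other, and using pointwise invariance of the bar construction.
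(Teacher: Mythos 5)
The paper does not actually prove this statement; it simply cites Heggie [Thm.~2.5], so there is no in-text argument to compare yours against. Your proof is correct in outline and is essentially the standard argument one would give. The factorization through $C(F',\scK,G)$ and the duality reduction are fine, the identification $C(F,\scK,G)\cong(\scK\int F)\int(G\circ\pi)$ does check out against Heggie's composition rule (I verified objects, morphisms and composites), and you correctly isolate where the real content lies: the comparison of homotopy colimits along the fibrewise equivalence $\widetilde\beta$, which does \emph{not} follow from $\widetilde\beta$ being a mere weak equivalence of categories. Two points should be made explicit if you write this up. First, the passage from $\hocolim_{\scK\int F}(BG\circ\pi)$ to the two-sided bar construction $B(BF,\scK,BG)$ is an application of the Fubini/transitivity theorem for homotopy colimits over a Grothendieck construction (Thomason's theorem again), using that the coefficient diagram is constant on each fibre $F(k)$, so that $\hocolim_{F(k)}\mathrm{const}_{BG(k)}\simeq BF(k)\times BG(k)$; you need the naturality of these identifications in $(F,G)$ to compare the two sides along $\widetilde\beta$. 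Second, ``pointwise invariance of the bar construction'' is the realization lemma for simplicial spaces, which requires the simplicial spaces $[n]\mapsto\coprod_{k_0\to\cdots\to k_n}BF(k_0)\times BG(k_n)$ to be proper; this holds here because the degeneracies are inclusions of CW subcomplexes. With those two standard facts quoted, your argument is complete and is, as far as one can tell, the same in spirit as Heggie's.
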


\begin{leer}\label{6_2}
 \textbf{Properties:} Let $\ast$ denote the constant diagram on the trivial category $\ast$. Let $\underline{\scK}$ denote the functor
 $$\underline{\scK}: \scK^{\op}\times \scK\to \Cat, \quad (k_0,k_1)\mapsto \scK(k_0,k_1),$$
 where the set $\scK(k_0,k_1)$ is regarded as a discrete category.
 \begin{enumerate}
  \item $C(\ast,\scK,G)$ is the Grothendieck construction $\scK\int G$ studied in \cite{T}, and $C(F,\scK,\ast)= F\int \scK$, the
  dual Grothendieck construction.
  \item $C(\ast,\scK,\ast)\cong \scK$.
  \item If $F:\scA\times \scK^{\op}\to \Cat$ and $G:\scK\times \scB^{op}\to \Cat$ are functors, we have an induced functor
  $$\scA\times \scB^{op}\to \Cat,\quad (A,B)\mapsto C(F(A,-),\scK,G(-,B)$$
  \item Given functors $F$ and $G$ as in (3), and functors $U:\scA^{\op}\to \Cat$ and $V:\scB\to \Cat$, then
  $$U\otimes_{\scA}C(F,\scK,G)\otimes_{\scB} V\cong C(U\otimes_{\scA}F,\scK,G\otimes_{\scB} V).$$
  \item Given functors $F:\scK^{\op}\to \Cat,\ G:\scK\times \scL^{\op}\to \Cat$ and $H:\scL\to \Cat$, then
  $$C(C(F,\scK,G),\scL,H)\cong C(F,\scK,C(G,\scL,H)).$$
  \item  There is a natural transformation $\varepsilon: C(\underline{\scK},\scK,G)\Rightarrow G$, defined by
  $$\varepsilon(k):C(\scK(-,k,\scK,G)\to G(k), \quad (\beta,k_0,y)\mapsto G(\beta)(y),$$
  which is a weak equivalence. Dually, there is a natural transformation $C(F,\scK,\underline{\scK})\to F$ which is a    
  weak equivalence.
 \end{enumerate}
\end{leer}

\begin{proof} 
 (1),...,(5) follow by inspection of the definitions. For (6) note, that $\varepsilon(k)$ has a section
 $$s_k:G(k)\to C(\scK(-,k,\scK,G),\quad y\mapsto (\id_k,k,y),$$
 and there is a natural transformation 
 $$\tau: \Id_{C(\scK(-,k,\scK,G)}\Rightarrow s_k\circ \varepsilon(k),\quad (\beta,k_0,y)\xrightarrow{(\id,\beta,\id)} (\id_k,k,G(\beta)(y))$$
 so that $B(\varepsilon(k))$ is a homotopy equivalence.
\end{proof}

%\begin{defi}\label{6_3}
 %If $\scC_*$ is a simplicial category, we call $C(\Delat^{\op},\Delat^{\op},C_*)$ its {\em standard resolution}.
%\end{defi}

\begin{prop}\label{6_3}
\begin{enumerate}
 \item For $i=0,1,2,3$ the resolved cosimplicial categories $C(\Delta,\Delta,D_i)$ define good categoric realization functors.
 \item For $i=0,1,2,3$ the functor 
 $$\SCat\to \Cat,\quad \scC_* \mapsto F_{D_i}(C(\scC_*,\Delta,\Delta))$$
 is good.
\end{enumerate}
\end{prop}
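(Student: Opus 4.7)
The plan is to unify both parts of the proposition by identifying the functors in \textup{(1)} and \textup{(2)} with the single Heggie bar construction $\scC_* \mapsto C(\scC_*, \Delta, D_i)$, and then to prove goodness of this common functor via comparison with the dual Grothendieck construction $\scC_* \int \Delta$.

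For the algebraic reductions I would parse $C(\Delta, \Delta, D_i)$ in \textup{(1)} as $C(\underline{\Delta}, \Delta, D_i)$, where the bifunctor $\underline{\Delta} : \Delta^{\op} \times \Delta \to \Cat$ carries the external cosimplicial structure on its second, covariant slot. Property \ref{6_2}(4) applied with $\scA = \Delta$, $U = \scC_*$, $\scB = V = \ast$, combined with the density coend $\scC_* \otimes_\Delta \underline{\Delta} \cong \scC_*$, yields
\[
F_{C(\underline{\Delta},\Delta,D_i)}(\scC_*) = \scC_* \otimes_\Delta C(\underline{\Delta}, \Delta, D_i) \cong C\bigl(\scC_* \otimes_\Delta \underline{\Delta}, \Delta, D_i\bigr) \cong C(\scC_*, \Delta, D_i).
\]
Dually, reading $C(\scC_*, \Delta, \Delta)$ in \textup{(2)} as $C(\scC_*, \Delta, \underline{\Delta})$, with the simplicial structure coming from the first, contravariant slot of $\underline{\Delta}$, the same property with $\scA = \ast$, $\scB = \Delta$, $V = D_i$ and the density coend $\underline{\Delta} \otimes_\Delta D_i \cong D_i$ produces
\[
F_{D_i}\bigl(C(\scC_*, \Delta, \underline{\Delta})\bigr) \cong C\bigl(\scC_*, \Delta, \underline{\Delta} \otimes_\Delta D_i\bigr) \cong C(\scC_*, \Delta, D_i).
\]
So the two functors in question both agree, up to natural isomorphism, with $\scC_* \mapsto C(\scC_*, \Delta, D_i)$.

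To finish I need $B \circ C(\scC_*, \Delta, D_i) \simeq |-| \circ B^{\Delta^{\op}}(\scC_*)$. For each $i \in \{0,1,2,3\}$ the classifying space $BD_i(n)$ is weakly equivalent to $\Delta^n$ --- trivially for $i = 0$; via $B[n] = \Delta^n$ for $i = 1$; by Fritsch--Latch--Thomason for $i = 2$; and as noted in Section \ref{sec.edge} for $i = 3$ --- hence contractible. Thus the augmentation $D_i \Rightarrow \ast$ to the constant cosimplicial trivial category is a levelwise weak equivalence, and Proposition \ref{6_1} produces a natural weak equivalence
\[
C(\scC_*, \Delta, D_i) \stackrel{\simeq}{\longrightarrow} C(\scC_*, \Delta, \ast) = \scC_* \int \Delta,
\]
the last equality being \ref{6_2}(1). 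Thomason's theorem then identifies $B(\scC_* \int \Delta) \simeq \hocolim_{\Delta^{\op}} B\scC_*$, and the homotopy colimit over $\Delta^{\op}$ of the simplicial space $B\scC_*$ is its geometric realization $|B^{\Delta^{\op}} \scC_*|$.

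The main obstacle is the final step. The two reductions in the second paragraph are formal manipulations with coends, but the homotopy-theoretic conclusion pulls together several external ingredients: the homotopy invariance statement in Proposition \ref{6_1}, a case-by-case verification that $BD_i(n) \simeq \ast$ for each $i \in \{0,1,2,3\}$, Thomason's identification of the classifying space of a dual Grothendieck construction with a homotopy colimit, and the standard equivalence $\hocolim_{\Delta^{\op}} X_* \simeq |X_*|$ for well-behaved simplicial spaces. Once these pieces are in place, the argument applies uniformly to all four cosimplicial categories.
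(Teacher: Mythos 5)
Your proposal is correct and follows essentially the same route as the paper: identify both functors with $C(\scC_*,\Delta,D_i)$ by coend manipulations, use the levelwise contractibility of $BD_i(n)$ together with Proposition \ref{6_1} to reduce to $\scC_*\int\Delta$, and then invoke Thomason's homotopy colimit theorem plus the properness of $B(\scC_*)$ to identify the result with $|B^{\Delta^{\op}}\scC_*|$. The only difference is that you spell out the coend identifications and the contractibility check in more detail than the paper does.
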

\begin{proof}
\begin{eqnarray*}
F_{C(\Delta,\Delta,D_i)}(\scC_*) &= & \scC_*\otimes_\Delta C(\Delta,\Delta,D_i)\\
&=& C(\scC_*,\Delta,D_i)\\
&\simeq & C(\scC_*,\Delta,\ast)\\
&=& \scC_*\int \Delta.
\end{eqnarray*}
\begin{eqnarray*}
F_{D_i}(C(\scC_*,\Delta,\Delta))&=& C(\scC_*,\Delta,\Delta)\otimes_{\Delta} D_i\\
&=& C(\scC_*,\Delta,D_i)\\
&\simeq & \scC_*\int \Delta.
\end{eqnarray*}
where $\simeq$ stands for weakly equivalent.
According to Thomason \cite{T}, we have 
$$ B\left(\scC_*\int \Delta\right)\simeq B\left(B(\scC_*),\Delta,\ast\right)\cong B\left(\ast,\Delta^{\op}, B(\scC_*)\right).$$
Since $B(\scC_*)$ is a proper simplicial space, i.e. the inclusions $sB(\scC_n)\subset B(\scC_n)$
of the degenerate elements are cofibrations, the homotopy colimit $B\left(\ast,\Delta^{\op}, B(\scC_*)\right)$
is homotopy equivalent to the topological realization $|B(\scC_*)|$.
\end{proof}
Unfortunately, the resolutions we chose are not product preserving.

\section{Algebras}\label{sec.alg}

In this section we will rely heavily on \cite{FSV2} and we use its notation.

Let $\scO$ be a $\Sigma$-free operad in $\Cat$, let $\scO\mbox{-}\Cat$ be its category of algebras, and $\hscO$ its associated
category of operators. Then $\hscO$ is the operad $\scO$, considered as a symmetric monoidal category, with the projections added
(for details see \cite[Sect. 2]{FSV2}).

If $X_\ast$ is a simplicial $\scO$-algebra, $C(X_\ast,\Delta,\Delta)$ ceases to be a simplicial $\scO$-algebra, but in each degree $k$ it defines an
$\hscO$-diagram, which we, in abuse of notation, denote by 
$$
C(X_\ast,\Delta,\Delta(k,-)):\hscO\to \Cat,\quad n\mapsto C(X^n_\ast,\Delta,\Delta(k,-)).
$$
Since there is a weak equivalence $C(X_\ast,\Delta,\Delta)\to X_\ast$ it is easy to check that this is a special $\hscO$-diagram, i.e.
the $n$ projections define a weak equivalence
$$C(X^n_\ast,\Delta,\Delta(k,-)) \to \left(C(X_\ast,\Delta,\Delta(k,-))\right)^n.$$

In \cite{FSV2} we constructed a rectification functor
$$R: \Cat^{\hscO}\to \scO\mbox{-}\Cat$$
with nice properties, where $\Cat^{\hscO}$ denotes the category of $\hscO$-diagrams. Applying $R$
to the $\hscO$-diagrams $C(X_\ast,\Delta,\Delta(k,-))$ we obtain a functor
$$ Q: \scS \scO\mbox{-}\Cat\to \scS \scO\mbox{-}\Cat,\quad  X_\ast\mapsto \left( [k]\mapsto R(C(X_\ast,\Delta,\Delta(k,-))\right),$$
where $\scS \scO\mbox{-}\Cat$ denotes the category of simplicial $\scO$ algebras in $\Cat$.

\textit{Claim:} $Q(X_\ast)$ is a good resolution of $X_\ast$ in the category $\scS \scO\mbox{-}\Cat$.

Before we prove this, we have to give a short recollection of the definition of $R$. Let $\tmT$ denote the groupoid of
planar trees and non-planar isomorphisms. Let $\scT$ be the category whose objects are isomorphism classes $[T]$ of trees $T\in\tmT$
and whose morphisms are generated by shrinking an internal edge or chopping off a subtree above an internal edge (for more details 
see \cite[Sect. 5]{FSV2}). There is a functor
$\underline{\scO}: \tmT^{\op} \to \Cat$ known from the construction of free operads (e.g. see \cite[Sect. 5.8]{BM}). 
Given an $\hscO$-diagram $G:\hscO \to \Cat$ there is also a functor $\lambda_G:\tmT \to \Cat$, defined as follows. 
Let $\Theta_n$ denote the tree with exactly one node and $n$ inputs. Any tree $T$ with a root node of valence $n$ decomposes 
uniquely into $n$ trees $T_1,\ldots,T_n$ whose outputs are grafted onto the inputs of $\Theta_n$. We denote 
this grafting operation by
$$
T=\Theta_n\circ(T_1\oplus\ldots\oplus T_n).
$$
We define $\lambda_G(\Theta_n)=G(1)^n$ and 
$$
\lambda_G(\Theta_n\circ(T_1\oplus\ldots\oplus T_n))=G(\In (T_1))\times\ldots \times G(\In (T_n))$$
where $\In (T_i)$ is the number of inputs of $T_i$.

 We define a diagram 
$$F^G: \scT\to \Cat, \quad [T]\mapsto \underline{\scO}\otimes_{[T]} \lambda_G$$
where the coend is taken over all representatives $T$ of $[T]$. The functor $R$ is given by
$$R: \Cat^{\hscO}\to \scO\mbox{-}\Cat,\quad G\mapsto C(\ast, \scT, F^G)=\scT\int F^G.$$
If $X$ is an $\scO$-algebra, we denote its associated $\hscO$-diagram by $\widehat{X}:\hscO\to \Cat$.
\begin{prop}\label{7_1}
 There is a natural weak equivalence $\zeta: Q(X_\ast) \to X_\ast$ of simplicial $\scO$-algebras.
\end{prop}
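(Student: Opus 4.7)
The plan is to build $\zeta$ levelwise as a composite of two natural weak equivalences supplied by the rectification machinery, and then verify simplicial naturality. Fix $[k]$ and let $G_k:\hscO\to\Cat$ denote the special $\hscO$-diagram $n\mapsto C(X_\ast^n,\Delta,\Delta(k,-))$ already identified in the text; let $\widehat{X_k}:\hscO\to\Cat$ be the $\hscO$-diagram associated to the $\scO$-algebra $X_k$, so that $\widehat{X_k}(n)=X_k^n$. Applied to $F=X_\ast^n:\Delta^{\op}\to\Cat$, the dual form of Property \ref{6_2}(6) gives, for each $n$, a natural weak equivalence
$$
\pi_k(n):C(X_\ast^n,\Delta,\Delta(k,-))\longrightarrow X_k^n,\qquad (x,[m],\alpha)\mapsto X_\alpha^n(x).
$$
Because both the $\hscO$-structure on $G_k$ and that on $\widehat{X_k}$ descend from the single simplicial $\scO$-algebra $X_\ast$ by functoriality, a direct check shows that the $\pi_k(n)$ commute with the $\hscO$-structure maps and thus assemble into an objectwise weak equivalence of $\hscO$-diagrams $\pi_k:G_k\to\widehat{X_k}$.

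I would then set $\zeta_k:=\mu_k\circ R(\pi_k)$, where $\mu_k:R(\widehat{X_k})\to X_k$ is the canonical augmentation arising when rectifying an honest $\scO$-algebra. Two properties of $R$ from \cite{FSV2} make this composite a weak equivalence of $\scO$-algebras: $R$ carries objectwise weak equivalences between special $\hscO$-diagrams to weak equivalences of $\scO$-algebras, so $R(\pi_k)$ is one; and $\mu_k$ is itself a weak equivalence for every $\scO$-algebra $X_k$, which is the defining rectification property. Consequently $\zeta_k:Q(X_\ast)_k=R(G_k)\to X_k$ is a weak equivalence of $\scO$-algebras.

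For simplicial naturality in $[k]$, I would observe that a simplicial operator $\phi:[k']\to[k]$ acts on $G_k$ by precomposition in the last slot, $(x,[m],\alpha)\mapsto(x,[m],\alpha\circ\phi)$, and on $\widehat{X_k}$ by $X_\phi^n$; the identity $X_\phi^n\circ X_\alpha^n=X_{\alpha\circ\phi}^n$ shows that $\pi$ intertwines the two, and functoriality of $R$ together with functoriality of the construction $X\mapsto\widehat{X}$ handles $\mu$. Hence $[k]\mapsto\zeta_k$ is a simplicial $\scO$-algebra map $\zeta:Q(X_\ast)\to X_\ast$, and by the previous paragraph it is a weak equivalence at every level.

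The main obstacle I expect is in justifying the two invoked homotopical properties of $R$: the preservation of objectwise weak equivalences between special $\hscO$-diagrams, and the weak equivalence $R(\widehat{X})\to X$ for every $\scO$-algebra $X$. Both are proved in \cite{FSV2} by analyzing $R(G)=\scT\int F^G$ as a Grothendieck-style bar construction and combining Heggie's Proposition \ref{6_1} with contractibility properties of the tree category $\scT$ and of the coends $\underline{\scO}\otimes_{[T]}\lambda_G$. Once these are granted, the remaining content of the statement reduces to the natural-transformation bookkeeping above.
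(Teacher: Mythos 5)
Your proposal is correct and follows essentially the same route as the paper: the paper's $\zeta$ is likewise the composite of $R$ applied to the augmentation $\varepsilon\colon C(X_\ast,\Delta,\Delta)\to X_\ast$ (your $R(\pi_k)$, a weak equivalence by \cite[6.7]{FSV2}) with the canonical map $C(\ast,\scT,F^{\widehat{X}_\ast})\to X_\ast$ (your $\mu_k$, a weak equivalence by \cite[7.1]{FSV2}). Your extra verification of simplicial naturality is bookkeeping the paper leaves implicit.
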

\begin{proof}
 We have weak equivalences
 $$
Q(X_\ast)= C\left(\ast, \scT, F^{C(X_\ast,\Delta,\Delta)}\right) \xrightarrow{\sim} C(\ast, \scT, F^{\widehat{X}_\ast})
 \xrightarrow{\sim} X_\ast.$$
 The first map is induced by the weak equivalence $\varepsilon: C(X_\ast,\Delta,\Delta)\to X_\ast$. It is a
 homomorphism and a weak equivalence by
 \cite[6.7]{FSV2}. The second map is a weak equivalence by \cite[7.1]{FSV2}.
\end{proof}

\begin{theo}\label{7_2}
For $D=D_1,\ D_3,\ D_4$ the functor
 $$\scS\Cat \to \Top, \quad X_\ast\mapsto B(Q(X_\ast)\otimes_\Delta D)$$
 is equivalent to the functor  $|-|\circ B^{\Delta^{\op}}$ via a chain of weak equivalences of homomorphisms of $B\scO$-algebras.
\end{theo}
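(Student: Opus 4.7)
The plan is to build the chain of natural maps
\[
B(Q(X_\ast)\otimes_\Delta D) \;\cong\; B\,R(C(X_\ast,\Delta,D)) \;\xrightarrow{\sim}\; B\,C(X_\ast,\Delta,D) \;=\; B\,F_{C(\Delta,\Delta,D)}(X_\ast) \;\simeq\; |B^{\Delta^{\op}}X_\ast|,
\]
where the last $\simeq$ expands into the zigzag furnished by the proof of Proposition \ref{6_3}, and each link will be a $B\scO$-algebra homomorphism.

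First I would establish the isomorphism $Q(X_\ast)\otimes_\Delta D \cong R(C(X_\ast,\Delta,D))$ by coend calculus. Starting from $Q(X_\ast)_k = C(\ast,\scT,F^{C(X_\ast,\Delta,\Delta(k,-))})$ and applying Property \ref{6_2}(4) repeatedly, one can commute $\otimes_{k\in\Delta}D(k)$ past the outer $C(\ast,\scT,-)$ and past the coend defining $F^G(T)=\underline{\scO}(T)\otimes_T\lambda_G(T)$ via Fubini for coends. The only nontrivial move is pushing the coend through the product $\lambda_G(T)=\prod_i G(\In T_i)$, which is legitimate precisely because $D$ is product preserving; this is why the theorem is restricted to $D_1,\,D_3,\,D_4$. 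Combined with the coYoneda identity $\Delta(k,-)\otimes_{k\in\Delta}D(k)\cong D(-)$, these manipulations yield the claimed identification as $\scO$-algebras.

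Second, I would show that the $\hscO$-diagram $n\mapsto C(X_\ast^n,\Delta,D)$ is special. The projections $C(X_\ast^n,\Delta,D)\to C(X_\ast,\Delta,D)^n$ become weak equivalences after applying $B$, since both sides compute $|B^{\Delta^{\op}}X_\ast|^n$ via properness of $B^{\Delta^{\op}}X_\ast$ and the fact that topological realization commutes with finite products of $k$-spaces (as in the proof of Proposition \ref{6_3}). Then \cite[7.1]{FSV2} supplies the $\scO$-algebra weak equivalence $R(C(X_\ast,\Delta,D))\to C(X_\ast,\Delta,D)$. Finally, $C(X_\ast,\Delta,D)=F_{C(\Delta,\Delta,D)}(X_\ast)$ is a good categoric realization by Proposition \ref{6_3}, whose proof extends verbatim to $D_4$ since $D_4(n)$ is contractible.

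The main obstacle will be verifying that every map in the zigzag is a $B\scO$-algebra homomorphism, not merely a weak equivalence of underlying spaces. This requires tracking that both the coend juggling in the first step and the comparison maps from \cite[6.7,\,7.1]{FSV2} respect the operadic actions, which ultimately reduces to the product-preservation of $D$ together with the functoriality of the $C(-,\scK,-)$ and rectification constructions. Once this bookkeeping is done, naturality in $X_\ast$ is automatic from the naturality of each construction.
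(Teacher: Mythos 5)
Your opening move --- identifying $Q(X_\ast)\otimes_\Delta D$ with $R(C(X_\ast,\Delta,D))=C(\ast,\scT,F^{C(X_\ast,\Delta,\Delta)\otimes_\Delta D})$ by Fubini for coends, and locating the product-preservation of $-\otimes_\Delta D$ as the reason for restricting to $D_1,D_3,D_4$ --- is exactly the paper's first step and is fine. The gap is in the middle of your chain. You invoke \cite[7.1]{FSV2} to produce a weak equivalence of $\scO$-algebras $R(C(X_\ast,\Delta,D))\to C(X_\ast,\Delta,D)$, but that result only supplies an augmentation $R(\widehat{Y})\to Y$ when the input diagram is of the form $\widehat{Y}$ for an honest $\scO$-algebra $Y$. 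Here $C(X_\ast,\Delta,D)$ is \emph{not} an $\scO$-algebra: $C(-,\Delta,D)$ is not product preserving (this is noted at the end of Section~\ref{sec.res} and is the whole reason the rectification machinery is needed), so $C(X_\ast,\Delta,D)$ is merely the $n=1$ value of a special $\hscO$-diagram and there is no algebra homomorphism to map onto. The same defect infects the remainder of your chain: $BC(X_\ast,\Delta,D)$, $BC(X_\ast,\Delta,\ast)$ and $B(\ast,\Delta^{\op},B(X_\ast))$ carry no $B\scO$-algebra structures, so running the zigzag of Proposition~\ref{6_3} at the level of underlying spaces cannot meet the requirement that every link be a homomorphism of $B\scO$-algebras. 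You flag this at the end as ``bookkeeping,'' but it is not bookkeeping; it is the crux, and it cannot be repaired within the chain you propose.

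The paper's proof avoids this by never collapsing the rectification until the very last step. One promotes the entire zigzag of Proposition~\ref{6_3} to a zigzag of weak equivalences of special $\widehat{B\scO}$-diagrams and applies $B(\ast,\scT,F^{(-)})$ throughout (using \cite[6.5, 6.6]{FSV2} to pass from $BC(\ast,\scT,F^{(-)})$ in $\Cat$ to the topological bar construction $B(\ast,\scT,BF^{(-)})$). Every node is then a genuine $B\scO$-space and every map a homomorphism by \cite[6.7]{FSV2}; this connects $B(Q(X_\ast)\otimes_\Delta D)$ to $B\left(\ast,\scT,F^{\widehat{|B(X_\ast)|}}\right)$, and only at that point is \cite[7.1]{FSV2} applied --- legitimately, since $|B(X_\ast)|$ \emph{is} a $B\scO$-algebra --- to augment down to $|B(X_\ast)|$. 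You would need to restructure your argument along these lines; your second paragraph on specialness of $n\mapsto C(X_\ast^n,\Delta,D)$ is the right ingredient, but it must feed into the homotopy invariance of the rectification rather than into a nonexistent augmentation.
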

\begin{proof}
 $$Q(X_\ast)\otimes_\Delta D=C\left(\ast, \scT,F^{C(X_\ast,\Delta,\Delta)}\right)\otimes_\Delta D\cong C\left(\ast, \scT,F^{C(X_\ast,\Delta,\Delta)}\otimes_\Delta D\right).$$
 Since $-\otimes_\Delta D$ is product preserving, this is an isomorphism of $\scO$-algebras. Now 
 $$ F^{C(X_\ast,\Delta,\Delta)}([T])\otimes_\Delta D=\underline{\scO}\otimes_{[T]} \lambda_{C(X_\ast,\Delta,\Delta)}\otimes_\Delta D
 \cong  F^{C(X_\ast,\Delta,\Delta)\otimes_\Delta D}([T])$$
 by inspection of the definition of $\lambda_G$.\\
 By \cite[6.5]{FSV2} there is a weak equivalence of $B\scO$-spaces
 $$B(\ast,\scT, B(F^{C(X_\ast,\Delta,\Delta)\otimes_\Delta D}))\to BC(\ast,\scT,  F^{C(X_\ast,\Delta,\Delta)\otimes_\Delta D })\cong B(Q(X_\ast)\otimes_\Delta D).$$
 By \cite[6.6]{FSV2} there is an isomorphism of $B\scO$-spaces
 $$ B\left(\ast,\scT, B(F^{C(X_\ast,\Delta,\Delta)\otimes_\Delta D})\right)\cong B(\ast,\scT,F^{B(C(X_\ast,\Delta,\Delta)\otimes_\Delta D)}).$$
 By the proof of \ref{6_3} there is a chain of weak equivalences
 $$\xymatrix{
 B(C(X_\ast, \Delta, \Delta)\otimes_\Delta D)\ar[r]^(.6)\cong & BC(X_\ast, \Delta, D)\ar[r] &BC(X_\ast, \Delta,\ast) \\
 & & B(\ast, \Delta^{\op}, B(X_\ast)) \ar[u]\ar[r] & |B(X_\ast)|
 }
 $$
 It follows that these weak quivalences extend to weak equivalences of $\widehat{B\scO}$-diagrams, which by \cite[6.7]{FSV2} induce weak equivalences
 of $B\scO$-spaces between
 $$B\left(\ast,\scT, B(F^{C(X_\ast,\Delta,\Delta)\otimes_\Delta D})\right)\quad \textrm{and} \quad B\left(\ast,\scT, F^{\widehat{|B(X_\ast)|}}\right).$$
 Here note that $|B(X_\ast)|$ is a $B\scO$-algebra. By \cite[7.1]{FSV2} there is a weak equivalence of $B\scO$-algebras
 $$ B(\ast,\scT, F^{\widehat{|B(X_\ast)|}})\to |B(X_\ast)|$$
 which completes the proof.
\end{proof}

%Introduction
%\input{operads}%Operads and their categories of operators
%\input{free}%Free operads
%\input{rect}%The topological case
%\input{diagram}% Tree-indexed diagrams
%\input{hocolim}%Homotopy colimits
%\input{operadchange}%Change of operads
%\input{Ohat}%From $\hat{\scO}$-diagrams to $\scO$-algebras
%\input{recttop}%From $\hat{\scO}$-spaces to $\scO$-algebras
%\input{simplicial}%From simplicial $\sco$-algebras to $\sco$-algebras
%\input{appli}%An application

%%%%%%%%%%%%%%%%%%%%   End of main body of article
%
%                             References
%
%   BiBTeX users uncomment the following line:
%
%\bibliographystyle{gtart}
%

\end{document}